\newcommand{\field}[1]{\mathbb{#1}}
\newcommand{\N}{\field{N}}
\newcommand{\Q}{\field{Q}}
\newcommand{\eps}{\varepsilon}
\newcommand{\norm}[1]{\mbox{$\left\| #1 \right\|$}}           
\newtheorem{theorem}{Theorem}[section]
\newtheorem{lemma}[theorem]{Lemma}
\newtheorem{prop}[theorem]{Proposition}
\newtheorem{assume}{Assumption}
\theoremstyle{plain}
\title{Kato's Theorem on the Integration of Non-Autonomous Linear Evolution Equations}
\author{Jochen Schmid and Marcel Griesemer\\  
\small Fachbereich Mathematik, Universit\"at Stuttgart, D-70569 Stuttgart, Germany\\
\small jochen.schmid@mathematik.uni-stuttgart.de}  
\date{}
\font\notefont=cmsl8 \pagestyle{myheadings}
\begin{document}
\maketitle

\begin{abstract}
This  paper is devoted to a comparison of early works of Kato and
Yosida on the integration of non-autonomous linear evolution equations $\dot{x} = A(t)x$ in Banach
space, where the domain $D$ of $A(t)$ is independent of $t$. Our focus is on the regularity assumed of $t\mapsto A(t)$ and our main objective is to clarify the meaning of the rather
involved set of assumptions given in Yosida's classic and highly influential \emph{Functional
  Analysis}.  We prove Yosida's assumptions to be equivalent to Kato's condition that $t\mapsto A(t)x$ is continuously differentiable for each $x\in D$. 
\end{abstract}

\section{Introduction}
This paper is devoted to a comparison of early works of Kato and
Yosida on the integration of non-autonomous, linear evolution equations in Banach space. Explicitly, we consider the abstract initial value problem  
\begin{equation}\label{ivp}
    \dot{x} = A(t)x,\qquad x(s)=y,
\end{equation}
in the Banach space $X$, where $A(t):D\subset X\to X$ for each $t\in
[0,1]$ is a closed linear operator with a dense domain $D$. 
The initial value $y$ belongs to $D$ and $0\leq s<1$. 
The importance of this problem is based on the vast range of applications and on the fact that 
problems of this kind are still the subject of research. Kato in 1953
assumes that $D$ is independent of $t$ and that $A(t)$  for each $t$ is the generator of a contraction
semigroup \cite{Kato53}. In addition, there are some regularity
assumptions on $t\mapsto A(t)$, which are now understood to be
equivalent to the simple condition that for every $x\in D$ 
\begin{equation}\label{C1-A}
 t\mapsto A(t)x\quad\text{is continuously differentiable}
\end{equation}
in the norm of $X$. These conditions are sufficient for the existence
of a unique evolution system (propagator) $U(t,s)$ such that $t\mapsto
U(t,s)y$ for $y\in D$ is a continuously differentiable solution of the initial value problem
\eqref{ivp} \cite{Kato53,EngelNagel,Pazy}. 

In 1956 and 1970, Kato generalized his above-mentioned result to
time-dependent domains and to linear operators $A(t)$ generating
semigroups that are not necessarily contractive \cite{Kato56,Kato70}.
The $C^1$-condition \eqref{C1-A} first appeared explicitly in
\cite{Kato56}. Meanwhile, Yosida, in the second edition of his classic
and influential \emph{Functional Analysis}, had given a simplified
presentation of Kato's work of 1953 with hypotheses that were adjusted accordingly \cite{YosidaFA2}. Yosida's regularity conditions appear weaker than Kato's $C^1$-condition as they involve no derivative of 
$A(t)$. Yet they are far more complicated. Yosida's account of Kato's
theorem remained unchanged over the last five editions of his book and it has been adopted 
by Reed and Simon, and by Blank, Exner and Havlicek \cite{ReedSimon,BEH}. Due to the authority and tremendous popularity 
of the books by Yosida and by Reed and Simon, Yosida's version of Kato's
theorem in a large scientific community is better known than the refined version 
of Kato stated above. 

In this paper we prove that Yosida's conditions, the above mentioned $C^1$-condition, and Kato's original conditions introduced in 1953 are all strictly equivalent.
Likewise, Yosida's regularity conditions in the case of locally convex
spaces can be simplified \cite{Yosida1965,Schmid2012}.
The equivalence of Kato's 1953-condition and the $C^1$-condition is
fairly easy to prove; it was known to Kato and it is known to the experts
on evolution equations \cite{EngelNagel, Pazy}. It is not entirely obvious, however, and we shall provide a proof
for the reader's convenience. The equivalence of
Yosida's conditions and the $C^1$-condition was discovered by one of
us in connection with the adiabatic theorem \cite{Schmid2011}.  This
equivalence is surprising, in view of the complexity of Yosida's conditions.
Nevertheless the proof is fairly short and the idea is simple: Yosida's assumptions require the uniform
convergence of certain left-sided difference quotients of the map $t\mapsto A(t)x$, $x\in D$. It is not hard to see that this requirement 
implies continuous differentiability, and this is the core of our
proof, see Lemma~\ref{lemma}, below. That the converse holds was known
previously to the experts and is straightforward to prove.

As far as we know the literature, before our work 
a direct comparison of the conditions of Kato and Yosida has never been undertaken. Such a comparison is mentioned neither in 
the monographs \cite{Krein-book,Tanabe79,Pazy,Goldstein,EngelNagel}
nor in the review articles \cite{Kato93,Sch2002}. Of course, Kato's
theorem has been generalized in various directions and for that the
reader is referred to Pazy's book \cite{Pazy} and to Kato's \emph{Fermi lectures}
from 1985 \cite{Kato1985}.

\section{Equivalence of regularity assumptions}

We recall from the introduction that  $A(t):D\subset X\to X$ for each $t\in
[0,1]$ is a closed linear operator with a dense, $t$-independent domain
$D$. We are interested in the case where $A(t)$, for each $t$ is the generator of a strongly continuous contraction semigroup,
but this is not needed for the comparison of regularity assumptions. The bounded invertibility of $1-A(t)$ and $A(t)$, respectively, suffices
to state the assumptions and to prove our theorems. We often write $I$ for the interval $[0,1]$.

Kato in Theorem~4 of \cite{Kato53} made the following assumption:

\begin{assume}[Kato 53] \label{ass:Kato}\text{}
\begin{itemize}
\item[(i)] $B(t,s) =(1-A(t))(1-A(s))^{-1}$ is uniformly bounded on $I\times I$.
\item[(ii)] $B(t,s)$ is of bounded variation in $t$ in the sense that there is an $N\geq 0$ such that
$$
        \sum_{j=0}^{n-1}\|B(t_{j+1},s) - B(t_j,s)\| \leq N<\infty
$$
for every partition $0=t_0<t_1<\cdots <t_n=1$ of $I$, at least for some $s$.
\item[(iii)] $B(t,s)$ is weakly differentiable in $t$ and
$\partial_{t}B(t,s)$ is strongly continuous in $t$, at least for some $s\in I$ 
\end{itemize}
\end{assume} 

Note that the statements (ii) and (iii) hold for all $s\in I$, if they are
satisfied for some $s$. This follows from $B(t,s)=B(t,s_0)B(s_0,s)$.
In the  proof of the Proposition~\ref{thm:Kato}, below, we will see that
conditions (i) and (ii) follow from condition (iii), and that
(iii) is equivalent to the $C^1$-condition \eqref{C1-A}. In 1953, Kato did not seem to be
aware of that but from remarks in \cite{Kato53,Kato56} it becomes
clear that he knew it by 1956. See also Remark~6.2 of \cite{Kato70}
which states that the new result -- Theorem 6.1 of \cite{Kato70} -- reduces
to Theorem~4 of \cite{Kato53} in the situation considered there. 

\begin{prop}[Kato]\label{thm:Kato}
Suppose that for each $t\in I$ the linear operator $A(t):D\subset X\to X$ is closed and that 
$1-A(t)$ has a bounded inverse. Then Assumption~\ref{ass:Kato} is satisfied if
and only if the $C^1$-condition \eqref{C1-A} holds.
\end{prop}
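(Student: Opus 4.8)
The plan is to isolate condition (iii) as the essential hypothesis and to exploit the algebraic identity
\[
 B(t,s)=1-(A(t)-A(s))(1-A(s))^{-1},
\]
which one verifies directly from the definition: for $x\in X$ one has $B(t,s)x-x=-(A(t)-A(s))(1-A(s))^{-1}x$. Equivalently $A(t)(1-A(s))^{-1}=(1-A(s))^{-1}-B(t,s)$, so for fixed $s$ the maps $t\mapsto B(t,s)$ and $t\mapsto A(t)(1-A(s))^{-1}$ differ only by a $t$-independent bounded operator, and any regularity in $t$ of one transfers to the other. Writing an arbitrary $x\in D$ as $x=(1-A(s))^{-1}z$ with $z=(1-A(s))x$, the identity gives $A(t)x=x-B(t,s)z$, so that the $t$-regularity of $A(\cdot)x$ on $D$ is governed entirely by that of $B(\cdot,s)$.

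I would first prove that (iii) implies the $C^1$-condition. By the displayed relation, $t\mapsto A(t)x=x-B(t,s)z$ inherits weak differentiability from $B(\cdot,s)$, with weak derivative $-\partial_tB(t,s)z$, which is norm-continuous in $t$ by the strong continuity assumed in (iii). To upgrade weak to strong differentiability I invoke the elementary fact that a map $f\colon I\to X$ which is weakly differentiable with norm-continuous weak derivative $g$ is in fact strongly $C^1$ with $f'=g$: for each $\phi\in X^*$ one integrates $\frac{d}{dt}\langle\phi,f(t)\rangle=\langle\phi,g(t)\rangle$ and uses that $\phi$ commutes with the Riemann integral of the continuous integrand to obtain $f(t)-f(t_0)=\int_{t_0}^{t}g(\tau)\,\rd\tau$, whence the fundamental theorem of calculus applies. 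This yields \eqref{C1-A}. For the converse, if \eqref{C1-A} holds then for each $x$ the map $t\mapsto B(t,s)x=x-(A(t)-A(s))(1-A(s))^{-1}x$ is strongly $C^1$, hence weakly differentiable, with $\partial_tB(t,s)x=-\frac{d}{dt}A(t)(1-A(s))^{-1}x$; Banach--Steinhaus, applied to the pointwise-convergent difference quotients, shows that $\partial_tB(t,s)$ is a bounded operator, and its strong continuity in $t$ follows from the assumed continuity of $t\mapsto\frac{d}{dt}A(t)y$. Thus (iii) and \eqref{C1-A} are equivalent, and the implication Assumption~$\Rightarrow$~\eqref{C1-A} is immediate since the Assumption contains (iii).

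It remains to deduce (i) and (ii) from (iii), fixing once and for all a value $s_*$ for which (iii) holds. Strong continuity of $\partial_tB(\cdot,s_*)$ on the compact interval $I$ gives, via Banach--Steinhaus, $M':=\sup_t\|\partial_tB(t,s_*)\|<\infty$. Since $B(s_*,s_*)=1$ and $B(t,s_*)x=x+\int_{s_*}^{t}\partial_\tau B(\tau,s_*)x\,\rd\tau$ by the strong fundamental theorem of calculus, I obtain at once $\|B(t,s_*)\|\le 1+M'$ for all $t$, and for any partition $0=t_0<\cdots<t_n=1$ the estimate $\sum_j\|B(t_{j+1},s_*)-B(t_j,s_*)\|\le M'$, which is the bounded-variation bound (ii) at $s_*$; using $B(t,s)=B(t,s_*)B(s_*,s)$ it transfers to every $s$.

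The main obstacle is the uniform bound (i) over the full square $I\times I$: the hypothesis (iii) controls $B$ only in the first variable, and the $C^1$-condition is a pointwise (strong) statement giving no a priori regularity in $s$. The observation that breaks this is that the first-variable control yields the operator-norm estimate $\|B(s,s_0)-1\|=\|(A(s)-A(s_0))(1-A(s_0))^{-1}\|\le\kappa(s_0)\,|s-s_0|$, where $\kappa(s_0)=\sup_\tau\|\frac{d}{d\tau}A(\tau)(1-A(s_0))^{-1}\|<\infty$ by Banach--Steinhaus. Hence for $s$ near $s_0$ a Neumann series shows that $B(s,s_0)$ is invertible with inverse $B(s_0,s)$ tending to $1$ in operator norm, so $s\mapsto B(s_0,s)$ is norm-continuous at $s_0$, for every $s_0$. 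Writing $B(s_*,s)=B(s_*,s_0)B(s_0,s)$ then shows that $s\mapsto B(s_*,s)$ is norm-continuous on all of $I$, so by compactness $B_0:=\sup_s\|B(s_*,s)\|<\infty$. Combining, $\|B(t,s)\|=\|B(t,s_*)B(s_*,s)\|\le(1+M')\,B_0$ uniformly on $I\times I$, which is (i). The delicate point throughout is precisely this passage from strong $t$-regularity to norm-continuity in $s$ via Neumann-series continuation; everything else is either the algebra of the opening identity or a routine application of Banach--Steinhaus and the fundamental theorem of calculus.
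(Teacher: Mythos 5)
Your proposal is correct and takes essentially the same route as the paper: equivalence of condition (iii) with the $C^1$-condition via the weak-to-strong integral representation of $B(t,s)x$, the uniform bound $\sup_\tau\|\partial_\tau B(\tau,s)\|<\infty$ from Banach--Steinhaus giving (ii), and the uniform bound (i) from norm-continuity of $t\mapsto B(t,s)$ together with continuity of the inverse and the identity $B(t,s)=B(t,s_*)B(s_*,s)$. Your Neumann-series argument merely spells out the continuity of operator inversion that the paper invokes implicitly, so the two proofs coincide in substance.
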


\begin{proof}
>From (iii) its follows (first in the weak, then in the strong sense) that 
\begin{equation}\label{B-calc}
    B(t,s)x - B(t',s)x = \int_{t'}^t \partial_{\tau}B(\tau,s)x\, d\tau.
\end{equation}
This equation shows that $t\mapsto B(t,s)x$ is of class $C^1$, which is equivalent to the 
$C^1$-condition \eqref{C1-A}. Hence (iii) is equivalent to the condition \eqref{C1-A} and it remains to 
derive (i) and (ii) from (iii). By the strong continuity of $\tau\mapsto\partial_{\tau}B(\tau,s)$ and by the principle of uniform boundedness,
\begin{equation}\label{B-univ}
    \sup_{\tau\in I}\| \partial_{\tau}B(\tau,s)\| <\infty.
\end{equation}
Combining \eqref{B-calc} with \eqref{B-univ}, we see that $B(t,s)$ is
of bounded variation as a function of $t$, which is statement (ii), and that
$t\mapsto B(t,s)$ is continuous in norm. Therefore the inverse $t\mapsto B(t,s)^{-1}=B(s,t)$ is continuous as well and  $B(t,s) = B(t,0)B(0,s)$
is uniformly bounded for $t,s\in I$.
\end{proof}

The following Assumption~\ref{ass:Yosida} collects the regularity conditions from 
Yosida's Theorem~XIV.4.1,~\cite{YosidaFA6}.

\begin{assume}[Yosida] \label{ass:Yosida}\text{}
\begin{itemize}
\item[(i)] 
$\{ (s',t') \in I^2: s' \ne t' \} \ni (s,t) \mapsto \frac{1}{t-s} \, C(t,s)x$ is bounded and uniformly continuous for all $x \in X$, where $C(t,s) := A(t) A(s)^{-1} - 1$ 
\item[(ii)] 
$C(t)x := \lim_{k \to \infty} k \, C(t, t-\frac{1}{k})x$ exists uniformly in $t \in (0,1]$ for all $x \in X$
\item[(iii)] $(0,1] \ni t \mapsto C(t)x$ is continuous for all $x \in X$. 
\end{itemize}
\end{assume} 

The continuity assumption (iii) above was added for convenience. It
follows from the uniform continuity in (i) and the uniform convergence in
(ii). In fact, (i) and (ii) imply that $(0,1] \ni t \mapsto C(t)x$ is
\emph{uniformly} continuous and hence can be extended continuously to
the left end point $0$. 

The following theorem is our main result.
The key ingredient for its proof is the Lemma~\ref{lemma}, below.

\begin{theorem} \label{thm:main2}
Suppose that for each $t\in I$ the linear operator $A(t):D\subset X\to X$ is closed and that 
$A(t)$ has a bounded inverse. Then Assumption \ref{ass:Yosida}  and the $C^1$-condition \eqref{C1-A} are equivalent. 
\end{theorem}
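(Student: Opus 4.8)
The plan is to prove the two implications separately, exploiting throughout the elementary identity
\[
  \frac{1}{t-s}\,C(t,s)x \;=\; \frac{A(t)A(s)^{-1}x - A(s)A(s)^{-1}x}{t-s} \;=\; \frac{A(t)y_s - A(s)y_s}{t-s},\qquad y_s := A(s)^{-1}x,
\]
which exhibits Yosida's basic object $\tfrac1{t-s}C(t,s)x$ as a difference quotient of $\tau\mapsto A(\tau)y_s$. This already suggests that the operator $C(t)$ of Assumption~\ref{ass:Yosida} should be $A'(t)A(t)^{-1}$, where $A'(t)y := \frac{d}{dt}A(t)y$, and that is the identification I would aim to establish in both directions. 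Note first that each $C(t,s)=A(t)A(s)^{-1}-1$ is automatically bounded: $A(s)^{-1}$ maps $X$ boundedly into $D$ and $A(t)$ is closed, so $A(t)A(s)^{-1}$ is everywhere defined and closed, hence bounded by the closed graph theorem.

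For the easier implication \eqref{C1-A}$\Rightarrow$Assumption~\ref{ass:Yosida}, I would first show that $C(t):=A'(t)A(t)^{-1}$ is bounded for each $t$, by applying the Banach--Steinhaus theorem to the bounded operators $k\big(A(t+\tfrac1k)A(t)^{-1}-1\big)$, which converge strongly to $C(t)$ by the $C^1$-hypothesis. Writing the fundamental theorem of calculus in strong form, $A(t)y_s-A(s)y_s=\int_s^t A'(\tau)y_s\,d\tau$ with $y_s=A(s)^{-1}x$, turns the identity above into $\frac1{t-s}C(t,s)x=\frac1{t-s}\int_s^t C(\tau)A(\tau)A(s)^{-1}x\,d\tau$. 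Conditions (i), (ii), (iii) then follow from the two uniform bounds $\sup_\tau\|C(\tau)\|<\infty$ and $\sup_{\tau,s}\|A(\tau)A(s)^{-1}\|<\infty$: boundedness and uniform continuity of the integrand, preserved under averaging, give (i); letting $s\to t^-$ inside the average gives (ii) with limit $C(t)A(t)A(t)^{-1}x=C(t)x$; and (iii) is the strong continuity of $t\mapsto C(t)x$. The one genuinely technical point is the second uniform bound, which I would obtain by observing that $\tau\mapsto A(\tau)A(0)^{-1}$ is in fact norm-Lipschitz, since $\|A'(\tau)A(0)^{-1}\|\le\|C(\tau)\|\,\|A(\tau)A(0)^{-1}\|$ is bounded uniformly in $\tau$; norm continuity of this invertible family on the compact interval $I$ then yields uniform boundedness of the inverse $A(0)A(s)^{-1}$, and hence of $A(\tau)A(s)^{-1}=A(\tau)A(0)^{-1}\cdot A(0)A(s)^{-1}$.

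The substantial implication is Assumption~\ref{ass:Yosida}$\Rightarrow$\eqref{C1-A}, and this is where Lemma~\ref{lemma} is meant to do the work. Fix $y\in D$ and $t_0\in I$ and set $x:=A(t_0)y$. The identity above, now read with the \emph{second} argument frozen at $t_0$, gives
\[
  \frac{A(t)y-A(t_0)y}{t-t_0}\;=\;\frac{1}{t-t_0}\,C(t,t_0)x,
\]
so the two-sided difference quotient of $\tau\mapsto A(\tau)y$ at $t_0$ is exactly $t\mapsto \varphi_x(t,t_0)$, where $\varphi_x(t,s):=\tfrac1{t-s}C(t,s)x$. By the uniform continuity in (i), $\varphi_x$ extends continuously to the diagonal of $I^2$; by (ii), evaluated along $s=t_0-\tfrac1k$, its diagonal value at $t_0$ equals $C(t_0)x$. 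Letting $t\to t_0$ therefore shows that $\tau\mapsto A(\tau)y$ is differentiable at $t_0$ with derivative $C(t_0)A(t_0)y$. Finally, from (i) and the uniform boundedness principle one gets $\sup_t\|C(t)\|<\infty$, so that $\|C(t)A(t)y-C(t_0)A(t_0)y\|$ is controlled by $\|A(t)y-A(t_0)y\|$ together with (iii), both of which vanish as $t\to t_0$; hence $t\mapsto A(t)y$ is of class $C^1$, which is \eqref{C1-A}.

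I expect the main obstacle to be precisely this last reconciliation of Yosida's \emph{one-sided} data with a genuine, two-sided derivative. Condition (ii) only furnishes the limit of the left-sided quotients $k\,C(t,t-\tfrac1k)x$, in which the vector $A(t-\tfrac1k)^{-1}x$ being differentiated drifts with $k$; on its own this says nothing about the derivative of $A(\cdot)y$ for a fixed $y$. The uniform continuity in (i) is exactly what bridges this gap, forcing the one-sided limit to coincide with the full limit along the diagonal. It is this transfer---uniform convergence of the left quotients together with uniform continuity of the two-variable quotient implying continuous differentiability---that I expect Lemma~\ref{lemma} to isolate in abstract form.
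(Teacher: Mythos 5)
Your proposal is correct, but for the substantial direction it takes a genuinely different route from the paper. The paper feeds $f(t)=A(t)x$ into Lemma~\ref{lemma}: it uses only the \emph{boundedness} part of condition~(i) (via uniform boundedness, to get $\sup_{s\ne t}\|(t-s)^{-1}C(t,s)\|<\infty$) together with the uniform convergence in~(ii), and then the telescoping-sum argument of Lemma~\ref{lemma} converts uniform convergence of the left-sided difference quotients into genuine continuous differentiability. You instead exploit the \emph{uniform continuity} in~(i) head-on: the off-diagonal set is dense in $I^2$ and $X$ is complete, so $(s,t)\mapsto(t-s)^{-1}C(t,s)x$ extends continuously to the diagonal; condition~(ii), read along $(t_0,t_0-\tfrac1k)$, identifies the diagonal value as $C(t_0)x$; and freezing the second argument at $t_0$ with $x=A(t_0)y$ turns the two-variable quotient into the two-sided difference quotient of $\tau\mapsto A(\tau)y$. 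This is a clean and arguably shorter argument, and it correctly isolates where the one-sided data of~(ii) gets upgraded to a two-sided derivative. What the paper's route buys in exchange is economy of hypotheses: as the remark after Theorem~\ref{thm:main2} points out, the uniform continuity in~(i) can be dropped entirely (if one adds the existence of $\lim_{t\searrow 0}C(t)x$ to~(iii)), and the Lemma-based proof survives that weakening, whereas your diagonal-extension argument collapses without it. Your converse direction is essentially the paper's, via the same integral representation of $(t-s)^{-1}C(t,s)x$.

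One small repair is needed in your converse direction: as written, your bound $\|A'(\tau)A(0)^{-1}\|\le\|C(\tau)\|\,\|A(\tau)A(0)^{-1}\|$ is circular, since a uniform bound on $\|C(\tau)\|=\|A'(\tau)A(\tau)^{-1}\|$ already presupposes control of $A(0)A(\tau)^{-1}$, which is exactly what the Lipschitz estimate is meant to deliver. The fix is immediate: $\tau\mapsto A'(\tau)A(0)^{-1}x=\dot A(\tau)\bigl(A(0)^{-1}x\bigr)$ is continuous for each $x$ by \eqref{C1-A}, so $\sup_\tau\|A'(\tau)A(0)^{-1}\|<\infty$ directly by the uniform boundedness principle; norm continuity of $\tau\mapsto A(\tau)A(0)^{-1}$ and of its inverse on the compact interval $I$ then follow as in the paper.
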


\noindent\emph{Remark.} Note that the bounded invertibility is no restriction. If $A(t)$ for each $t\in I$ is the generator of a contraction semigroup,
then so is $A(t)-1$ and moreover, by Hille--Yosida, $A(t)-1$ has a bounded inverse.

\begin{proof}
Assumption~\ref{ass:Yosida} $\Rightarrow$ \eqref{C1-A}:
Suppose that conditions (i) - (iii) of Assumption~\ref{ass:Yosida} are satisfied and let
$x \in D$. We show that the map $t \mapsto f(t) = A(t)x$ satisfies the
hypotheses of Lemma~\ref{lemma}, below. By definition of $C(t,s)$, 
\begin{equation}\label{f-C}
   f(t) - f(s) = (A(t)A(s)^{-1} - 1)A(s)x = C(t,s)f(s)
\end{equation}
where, by~(i), the norm of $(t-s)^{-1}C(t,s)f(s)$ for fixed $s$ is a bounded
function of $t\in [0,1]\backslash\{s\}$. It follows that
$f(t)-f(s) \to 0$ as $t\to s$. As a further consequence of (i) we
obtain, by the principle of uniform boundedness, that
\begin{equation}\label{C-bound}
  M:= \sup_{s\neq t}\|(t-s)^{-1}C(t,s)\| <\infty.
\end{equation}
Setting $s=t-k^{-1}$ in \eqref{f-C} we obtain for fixed
$t>0$ and all $k>t^{-1}$ that 
\begin{align}\nonumber
    k \Bigl( f(t) - f\bigl(t-\frac{1}{k}\bigr)\Bigr)  =&\  kC\bigl(t,t-\frac{1}{k}\bigr) f\bigl(t-\frac{1}{k}\bigr)\\
   =&\  kC\bigl(t,t-\frac{1}{k}\bigr) f(t) +kC\bigl(t,t-\frac{1}{k}\bigr)\Bigl(f\bigl(t-\frac{1}{k}\bigr)-f(t)\Bigr)\label{df}\\ &\longrightarrow  C(t)f(t) \qquad (k \to \infty).\nonumber
\end{align}
Here we used part (ii) of Assumption~\ref{ass:Yosida},  the continuity
of $f$, and that $\|kC(t,t-k^{-1})\|\leq M$  by \eqref{C-bound}. Since $f$ is uniformly continuous on
the compact interval $I$ the second term of \eqref{df} vanishes uniformly in $t$. It remains to prove uniform convergence
for the first term of \eqref{df}. Using the uniform continuity of $f$
again, we may choose a partition $0=t_0<t_1\ldots <t_N=1$ of $[0,1]$ such that $\|f(t) -
f(t_i)\| \leq \eps/3M$ for all $t\in (t_{i-1},t_i]$ and all $i$. Then, by (ii), we can find $k_\eps$ such that for $k\geq k_\eps$
the inequality $\|kC\bigl(t, t-\frac{1}{k}\bigr)f(t_i)-C(t)f(t_i)\| <\eps/3$ holds for all $t\in [k^{-1},1]$ and
all $i=1,\ldots,N$. These two estimates combined imply that 
$$
    \sup_{t\in [k^{-1},1]}\bigg\| kC\bigl(t,t-\frac{1}{k}\bigr) f(t) - C(t)f(t)\bigg\| \leq \eps\qquad \text{for}\ k\geq k_\eps,
$$ 
as desired. Finally we note that the limit map $(0,1] \ni t \mapsto C(t)A(t)x$ is continuously extendable to the left endpoint $0$ by the remark following 
Assumption~\ref{ass:Yosida}. We have thus verified all hypotheses of Lemma~\ref{lemma}, and this lemma shows that the $C^1$-condition~\eqref{C1-A} is satisfied.

\medskip
\eqref{C1-A} $\Rightarrow$ Assumption~\ref{ass:Yosida}: Suppose  that
\eqref{C1-A} is satisfied and let $\dot{A}(t)x$ denote the derivative
of $A(t)x$. Then $s\mapsto A(s) A(0)^{-1}$ is strongly continuously differentiable and
hence continuous in norm. It follows that the inverse  
$s \mapsto \bigl( A(s) A(0)^{-1} \bigr)^{-1} = A(0) A(s)^{-1}$ is
norm-continuous as well. Thus, by \eqref{C1-A}, the map
\begin{align*}
  (s,\tau) \mapsto \dot{A}(\tau) A(s)^{-1} x = \dot{A}(\tau)A(0)^{-1} \, A(0)A(s)^{-1} x
\end{align*}
is continuous for every $x \in X$. From this, using the integral representation 
\begin{align*}
\frac{1}{t-s} \, C(t,s)x = \frac{1}{t-s} \, \bigl( A(t) - A(s) \bigr) A(s)^{-1} x = \frac{1}{t-s} \, \int_s^t \dot{A}(\tau) A(s)^{-1} x \, d\tau,
\end{align*} 
one readily obtains that $\{ s' \ne t' \} \ni (s,t) \mapsto  \frac{1}{t-s} \, C(t,s)x$ extends to a continuous map on the whole of $I^2$ from which conditions~(i) through~(iii) of Assumption~\ref{ass:Yosida} are obvious. 
\end{proof}

The exposition of Yosida's proof given in \cite{Schmid2011} shows that  the continuity in part (i) of Assumption~\ref{ass:Yosida} may be dropped
if in part (iii) the requirement is added that the limit $\lim_{t\searrow 0}C(t)x$ exists for all $x\in X$. Our proof of Theorem~\ref{thm:main2} shows, that this
modified version of  Assumption~\ref{ass:Yosida} is still equivalent to the $C^1$-condition \eqref{C1-A}.


The main ingredient for the proof of Theorem~\ref{thm:main2} is the following lemma.
It is a discretized version of the well-known, elementary
fact that a continuous and left-differentiable map with vanishing left
derivative is constant (see Lemma~III.1.36 in \cite{Kato-book} or
Corollary~1.2, Chapter 2 of \cite{Pazy}).

\begin{lemma} \label{lemma}
Suppose $f: [0,1] \to X$ is continuous and the limit $g(t) := \lim_{k \to \infty} k \, \bigl( f(t) - f(t-\frac{1}{k}) \bigr)$ exists uniformly in $t \in (0,1]$, that is, the limit exists for every $t \in (0,1]$ and $\sup_{t \in [\frac{1}{k},1]} \norm{ k \, \bigl( f(t) - f(t-\frac{1}{k}) \bigr) - g(t)} \longrightarrow 0 \,\,(k \to \infty)$. Then 
\begin{equation}\label{mve}
     \norm{ f(1) - f(t) } \le (1-t) \sup_{\tau \in [t,1]}\norm{g(\tau)} \quad\text{for all}\ t\in (0,1],
\end{equation}
and $f$ is continuously differentiable in $(0,1]$ with $f'=g$. If, in addition, the limit $g(0):=\lim_{t \searrow 0} g(t)$ exists, then $f'=g$ on $[0,1]$.
\end{lemma}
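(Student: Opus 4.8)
The plan is to reduce all three assertions to a single mean value inequality on arbitrary subintervals and then to bootstrap from the sequential difference quotients (taken along $h=1/k$) to the genuine derivative. First I would record that $g$ is continuous on $(0,1]$: on each interval $[\delta,1]$ with $\delta>0$ the maps $\tau\mapsto k\bigl(f(\tau)-f(\tau-\tfrac1k)\bigr)$ are continuous and, by hypothesis, converge uniformly to $g$ for $k\ge 1/\delta$, so $g$ is continuous there and hence on all of $(0,1]$. The central tool would be the following strengthening of \eqref{mve}: for every $0<a\le b\le1$,
\begin{equation}\label{star}
   \norm{f(b)-f(a)}\le (b-a)\sup_{\tau\in[a,b]}\norm{g(\tau)}.
\end{equation}
The inequality \eqref{mve} is exactly the special case $a=t$, $b=1$.

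Next I would prove \eqref{star} by a telescoping argument, which I expect to be the main obstacle. Given $\eps>0$, the uniform convergence lets me fix $k$ so large (in particular $a\ge 1/k$) that $\norm{k\bigl(f(\tau)-f(\tau-\tfrac1k)\bigr)}\le \sup_{[a,b]}\norm{g}+\eps$ for all $\tau\in[1/k,1]$. Put $m=\lfloor k(b-a)\rfloor$ and $\tau_j=a+j/k$, so $\tau_j\in[a,b]$ and $\tau_j-\tfrac1k=\tau_{j-1}$; telescoping then gives
\[
   \norm{f(\tau_m)-f(a)}
   =\Bigl\|\sum_{j=1}^{m}\tfrac1k\,k\bigl(f(\tau_j)-f(\tau_j-\tfrac1k)\bigr)\Bigr\|
   \le\frac{m}{k}\Bigl(\sup_{[a,b]}\norm{g}+\eps\Bigr)
   \le(b-a)\Bigl(\sup_{[a,b]}\norm{g}+\eps\Bigr).
\]
The non-integer remainder is harmless: since $0\le b-\tau_m<1/k$, continuity of $f$ gives $f(\tau_m)\to f(b)$ as $k\to\infty$, so letting first $k\to\infty$ and then $\eps\to0$ yields \eqref{star}.

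With \eqref{star} in hand, differentiability follows by an affine substitution that makes the limiting slope vanish. Fixing $t\in(0,1]$, I set $F(s):=f(s)-s\,g(t)$; a one-line computation shows $k\bigl(F(\tau)-F(\tau-\tfrac1k)\bigr)\to g(\tau)-g(t)$ uniformly, so $F$ satisfies the same hypotheses as $f$ with limit $g(\cdot)-g(t)$, and \eqref{star} applies to $F$. On $[t-h,t]$ (for $h>0$ small enough that the interval stays in $(0,1]$) this gives
\[
   \Bigl\|\tfrac1h\bigl(f(t)-f(t-h)\bigr)-g(t)\Bigr\|\le\sup_{\tau\in[t-h,t]}\norm{g(\tau)-g(t)},
\]
and the analogous estimate holds on $[t,t+h]$ for the right difference quotient. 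By continuity of $g$ at $t$ both right-hand sides tend to $0$ as $h\to0^{+}$, so $f$ is differentiable at $t$ with $f'(t)=g(t)$; since $g$ is continuous, $f\in C^1\bigl((0,1]\bigr)$.

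Finally, under the extra hypothesis that $g(0):=\lim_{t\searrow0}g(t)$ exists, I would extend $g$ continuously by this value; then \eqref{star} remains valid for $a=0$, since the only new telescoping term $k\bigl(f(1/k)-f(0)\bigr)$ is controlled by the uniform convergence at the endpoint $\tau=1/k$. Repeating the substitution with $F(s)=f(s)-s\,g(0)$ and applying \eqref{star} on $[0,h]$ bounds $\norm{\tfrac1h\bigl(f(h)-f(0)\bigr)-g(0)}$ by $\sup_{\tau\in(0,h]}\norm{g(\tau)-g(0)}$, which tends to $0$ by the continuity of the extended $g$ at $0$. Hence $f$ has right derivative $g(0)$ at $0$, and together with the previous paragraph $f'=g$ on all of $[0,1]$.
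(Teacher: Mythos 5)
Your proof is correct, and while it rests on the same engine as the paper's --- a telescoping sum whose summands are controlled by the uniform convergence of the discrete difference quotients --- it takes a genuinely different route in two places. First, the paper proves only the estimate \eqref{mve} (from $t$ to the right endpoint $1$), telescoping from a rational $q=1-r/s$ with the exactly matching step size $1/(ns)$ and then invoking density of the rationals; you instead prove the stronger subinterval inequality $\|f(b)-f(a)\|\le (b-a)\sup_{\tau\in[a,b]}\|g(\tau)\|$ for arbitrary $0<a\le b\le 1$, using step $1/k$, the floor $m=\lfloor k(b-a)\rfloor$, and continuity of $f$ to absorb the non-integer remainder. Second, for differentiability the paper subtracts the antiderivative $\int_0^t g$ and deduces constancy of the difference (carrying this out only under the extra hypothesis at $0$ and leaving the $C^1((0,1])$ claim as an exercise), whereas you subtract the affine function $s\mapsto s\,g(t)$ and apply your subinterval inequality on shrinking intervals $[t-h,t]$ and $[t,t+h]$; this converts the purely left-sided discrete data into a genuine two-sided derivative and is precisely where the stronger subinterval form pays off --- it also fills in the part the paper leaves to the reader, and it avoids any integration of $g$. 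One harmless imprecision: your bound $\|k(f(\tau)-f(\tau-\tfrac1k))\|\le\sup_{[a,b]}\|g\|+\eps$ is claimed for all $\tau\in[1/k,1]$ but is only valid (and only needed) for $\tau\in[a,b]$.
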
 
 
\begin{proof}
The map $g$ is continuous on $(0,1]$ by the continuity of $f$ and the uniform convergence assumption. By the density of $(0,1) \cap \Q$  in $I$ and the continuity of
$f$ and $g$ on $(0,1]$ it suffices to show that, for every $\eps>0$ and for every $q\in
(0,1) \cap \Q$, the estimate
\begin{align} \label{zwbeh}
\norm{ f(1) - f(q) } \le (1-q)(M_q + \eps)
\end{align}
holds with $M_q:= \sup_{\tau \in [q,1]}\norm{g(\tau)}<\infty$. Let $q=1-r/s$
with $r,s\in \N$ and let $\eps >0$. For any $n\in \N$ we may write 
the difference $f(1) - f(1-r/s)$ as a telescoping sum
\begin{align} \label{teleskopsumme}
f(1) - f\Big(1-\frac{r}{s} \Big) = f(1) - f\Big(1- \frac{n r}{n s} \Big) = \sum_{k=0}^{n r-1} \left[f\Big(1-\frac{k}{n s} \Big) - f\Big(1-\frac{k}{n s}-\frac{1}{n s} \Big)\right]
\end{align}
where, by the assumed uniform convergence, we may choose $n$ so large, that
\begin{align} \label{estimate}
\sup_{t \in [q, 1] } \norm{ f(t) - f\Big(t-\frac{1}{ns} \Big) } \le (M_q + \eps) \, \frac{1}{n s}.
\end{align}
Combining~\eqref{teleskopsumme} and~\eqref{estimate} we immediately
obtain~\eqref{zwbeh} and the proof of the estimate~\eqref{mve} is complete. 

\medskip If the limit $\lim_{t \searrow 0} g(t)$ exists,
we can define $h(t) := f(t) - \int_{0}^t g(\tau) \,d\tau$ for $t \in
I$. It is straightforward to check that  $ k(h(t) - h(t-k^{-1}) \to
0$ uniformly and the established estimate \eqref{mve} yields the constancy of $h$. The
proof of the remaining statement of the lemma is an easy exercise that
is left to the reader.
\end{proof}


\end{document}